\newtheorem{thm}{Theorem}[section]
\newtheorem{lem}[thm]{Lemma}
\newtheorem{cor}[thm]{Corollary}
\begin{document}
\title{\bf The minimum number of maximal dissociation sets in unicyclic graphs}

\author[1]{Junxia Zhang}
\author[2]{Xiangyu Ren}
\author[3,*]{Maoqun Wang}
\affil[1] {\small School of Mathematical Sciences, Xiamen  University, Xiamen 361005, P.R. China} 
\affil[2] {\small School of Mathematical Sciences, Shanxi  University, Taiyuan 030006, P.R. China}
\affil[3] {\small School of Mathematics and Information Sciences, Yantai University, Yantai 264005, P.R. China}
\affil[*] {Corresponding author: \texttt{wangmaoqun@ytu.edu.cn}}
\date{}
\maketitle

\begin{abstract}
A subset of vertices in a graph $G$ is considered a maximal dissociation set if it induces a subgraph with vertex degree at most 1 and it is not contained within any other dissociation sets. In this paper, it is shown that for $n\geq 3$, every unicyclic graph contains a minimum of $\lfloor n/2\rfloor+2$ maximal dissociation sets. We also show the graphs that attain this minimum bound.
\end{abstract}

\vspace{2mm} \noindent{\bf Keywords}: dissociation set; maximal dissociation set; unicyclic graph; extremal graph
\vspace{2mm}

\setcounter{section}{0}
\section{Introduction}\setcounter{equation}{0}

In a graph $G$, a set of vertices is defined as a {\it dissociation set} if it results in a subgraph where the maximum vertex degree is at most 1. This dissociation set is considered {\it maximal} if it is not contained within any other dissociation sets, or {\it maximum} if it has the largest possible cardinality.
Yannakakis \cite{yannakakis} was the first to propose the dissociation set, the scholar extends the familiar independent set and induced matching concepts, see \cite{stock,gener,match2}.
Finding a dissociation set in a specified graph has been proved to be NP-hard, as evidenced in the case of bipartite graphs or planar graphs \cite{np2,yannakakis}.
Furthermore, the concept of dissociation set of graphs can be seen as a case of $k$-improper coloring (also known as defective coloring), see \cite{coven,havet}.

Since 1960s, Erd\H{o}s and Moser gave the upper bound on the number of maximal independent sets for graphs of $n$ vertices.
After that, many researchers have obtained similar results on trees \cite{wilf}, connected graphs excluding $C_3$ \cite{jou2}, graphs with at most one cycle \cite{jou1}, and other graph classes \cite{goh,indepen4}. In 2008, Koh, Goh and Dong \cite{indepen1} determined the upper bound on the number of maximal independent sets for unicyclic connected graphs.
Shortly thereafter, Tu et al. \cite{tu,shi} have studied the upper bound on the number of maximum dissociation sets for trees and graphs excluding $C_3$. Sun and Li \cite{sun} recently obtained the same result for forests with given order and dissociation number, and Zhang et al. \cite{jun} proved the minimum number of maximal dissociation sets of trees.

Based on the research results mentioned above, it is natural to study the lower bound on the number of maximal dissociation sets in unicyclic graphs. Our result is as follows.

The set of all maximal dissociation sets in $G$ and its cardinality are denoted by $MD(G)$ and $\phi (G)$, respectively. Given two non-negative integers $p$ and $q$ with $p\geq q$,
$T(p,q)$ represents a class of trees and the definition can refer to section 1 of \cite{jun}.  Let $U(p,q)$ be the graph obtained by identifying a vertex of $K_3$ and the center of $T(p,q)$. A class of unicyclic graph, denoted by $\mathscr{U}_{r,t}$, obtained from a cycle of order $r$ by adding a pendant vertex to each of its $t$ vertices. Let $U_{r,t}\in \mathscr{U}_{r,t}$, where $r\geq 3$ and $t\geq 0$. It is clear that $U(0,0)=U_{3,0}$ and $U(1,0)=U_{3,1}$.

\begin{thm}\label{th1}
	Let $G$ be a unicyclic graph of order $n$ ($n\geq 3$), we have $\phi(G)\geq \lfloor \frac{n}{2}\rfloor+2$, with equality if and only if one of the following statements holds:\\
\text{(1) }{ $n$ is odd, $G=U\left(\frac{n-3}{2},\frac{n-3}{2}\right)$;}\\
\text{(2) }{ $n$ is even, $G=U\left(\frac{n-2}{2},\frac{n-4}{2}\right)$ (see Figure \ref{fig0}). In addition, when $n=6$ and $n=8$,  $G\in \{U_{6,0},U_{5,1}\}$ and $G=U_{4,4}$
 are also extremal graphs, respectively(see Figure \ref{fig3}).}
\end{thm}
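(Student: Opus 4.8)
The plan is to prove the lower bound by induction on the order $n$ and to read off the equality cases by tracking when the inductive inequalities are tight. First I would dispose of all small orders, say $3\le n\le 8$, by direct enumeration of the finitely many unicyclic graphs; this is precisely where I expect the sporadic extremal graphs $U_{6,0},U_{5,1}$ (for $n=6$) and $U_{4,4}$ (for $n=8$) to surface, so these base cases must be checked exhaustively rather than absorbed into the uniform argument. For the inductive step I would split on whether $G$ is the cycle $C_n$ itself or carries a pendant vertex. In the first case I would set up a linear (transfer-matrix) recurrence for the number of maximal dissociation sets of a cycle — a vertex subset in which no vertex has both of its cyclic neighbours selected, taken maximal under addition — and show that $\phi(C_n)$ grows at least geometrically, hence strictly exceeds $\lfloor n/2\rfloor+2$ once $n\ge 7$; the boundary value $\phi(C_6)=5$ is exactly what accounts for $U_{6,0}$ in the $n=6$ enumeration.

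If instead $G$ has a pendant vertex, I would take a longest path and let $v$ be its leaf endpoint with $u=N(v)$. The core is a deletion recurrence of the shape $\phi(G)\ge \phi(G')+1$, where $G'$ is obtained by removing a short pendant ($v$ alone, the pendant edge $uv$, or a pendant $P_3$, according to $\deg u$ and to whether $u$ lies on the cycle), chosen so that $G'$ is again unicyclic of smaller order. I would prove this by partitioning $MD(G)$ according to how a maximal dissociation set $D$ meets the pendant: $v\in D$ with $u\notin D$, $v\in D$ with $uv$ matched, and $v\notin D$ (which by maximality forces $u\in D$ saturated). From this partition I would build an injection $MD(G')\hookrightarrow MD(G)$ together with at least one maximal set of $G$, created by the pendant itself, lying outside its image. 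Feeding the inductive estimate $\phi(G')\ge\lfloor (n-2)/2\rfloor+2=\lfloor n/2\rfloor+1$ into the recurrence then closes the bound.

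The hard part will be two intertwined issues. The first is preserving maximality across deletion: a maximal dissociation set of $G'$ need not extend uniquely, or at all, to $G$, and the extension depends on the status in $D'$ of the attachment vertex $x_2$ (the neighbour of $u$ further along the path), so the injection has to be defined by a careful case split on whether $x_2\in D'$ and whether $x_2$ is isolated or matched there. The second, and I expect the genuine obstacle, is the boundary regime in which the pendant trees are short — pendant leaves hung directly on cycle vertices, i.e.\ the graphs $U_{r,t}$. There, deleting a single leaf drops the order by one while the target $\lfloor n/2\rfloor+2$ barely moves, so the clean ``$+1$'' recurrence degrades (the parity of $n$ becoming decisive), and these nearly tight configurations are exactly what generate the even-$n$ family $U\!\left(\frac{n-2}{2},\frac{n-4}{2}\right)$ together with the sporadic $U_{4,4}$; handling them will demand a finer local analysis at the cycle, in comparison with the extremal trees $T(p,q)$ of \cite{jun}, rather than a single recurrence. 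Finally, for the equality characterization I would demand equality at every step of the induction, which forces each deleted pendant to be of the unique tight type and forces $G'$ to be the corresponding smaller extremal graph; reassembling, $G$ must be a triangle carrying the balanced pendant structure $T(p,q)$, that is $U(p,q)$ with the stated parameters, the even-order sporadic graphs being precisely those flagged in the base cases.
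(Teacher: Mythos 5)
Your generic step coincides with the paper's: deleting a pendant edge $uv$ (a leaf $v$ whose support $u$ has degree $2$ and lies off the cycle) and proving $\phi(G)\ge\phi(G-\{u,v\})+1$ is exactly the paper's Lemma \ref{G-uv}, and your transfer-matrix treatment of $C_n$ is a workable substitute for the paper's comparison $\phi(C_n)\ge\phi(P_{n-1})+1$. The genuine gap is the regime you yourself flag as ``the genuine obstacle'' and then defer: the graphs $\mathscr{U}_{r,t}$ with $t\ge 1$, in which every leaf hangs directly on a cycle vertex. For these graphs none of your three deletions is both available and valid. There is no pendant $P_3$ and no degree-two support vertex; deleting the leaf together with its support destroys the cycle, so the resulting graph is a tree and your induction hypothesis (which concerns unicyclic graphs) says nothing about it; and the one remaining move, deleting the leaf $v$ alone with a ``$+1$'', is simply false when the support lies on the cycle. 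Concretely, direct enumeration gives $\phi(U_{4,1})=\phi(C_4)=6$ and $\phi(U_{3,3})=\phi(U_{3,2})=6$: the extra maximal set that your injection is supposed to miss need not exist, because every maximal dissociation set containing the new leaf can stay maximal after that leaf is removed. The second failure occurs at even order, exactly the parity at which your arithmetic requires the full ``$+1$''; and since $\mathscr{U}_{r,t}$ contains graphs of every order, the problem cannot be absorbed into a base-case enumeration up to $n=8$ (granting the theorem itself, $\phi(U_{4,4})=6\le\phi(U_{4,3})$, because $U_{4,3}$ is a non-extremal unicyclic graph of order $7$; so even at $n=8$ the recurrence is unavailable).

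The missing idea is that at this point one must leave the class of unicyclic graphs. The paper's Case 2 deletes the leaf $y$ together with its cycle support $x$, observes that $G-N[y]$ is a caterpillar of order $n-2$, proves the stronger recurrence $\phi(G)\ge\phi(G-N[y])+2$, and closes the bound with the tree result of \cite{jun} (Corollary \ref{cor1}) rather than with the unicyclic induction hypothesis; the equality analysis of precisely this step is what produces the sporadic even-order extremal graphs $U_{5,1}$ and $U_{4,4}$. Nothing in your plan plays this role. Two further, repairable, shortfalls: for an off-cycle support vertex of degree at least $3$, your single-leaf deletion does yield ``$+1$'' (any maximal set in which $u$ is matched to $v$ lies outside the image of your injection, since after deleting $v$ the vertex $u$ can be re-matched to a sibling leaf), but the paper instead uses an exchange argument (Lemma \ref{lem2}) to conclude cleanly that such graphs are never extremal; and your equality tracking must also pin down the vertex of $G-\{u,v\}$ to which the pendant edge is attached (the paper's Subcases 1--4 show that only the center of the extremal graph works), not merely force $G-\{u,v\}$ to be extremal.
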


\begin{figure}[h]
  \centering
  \includegraphics[width=110mm]{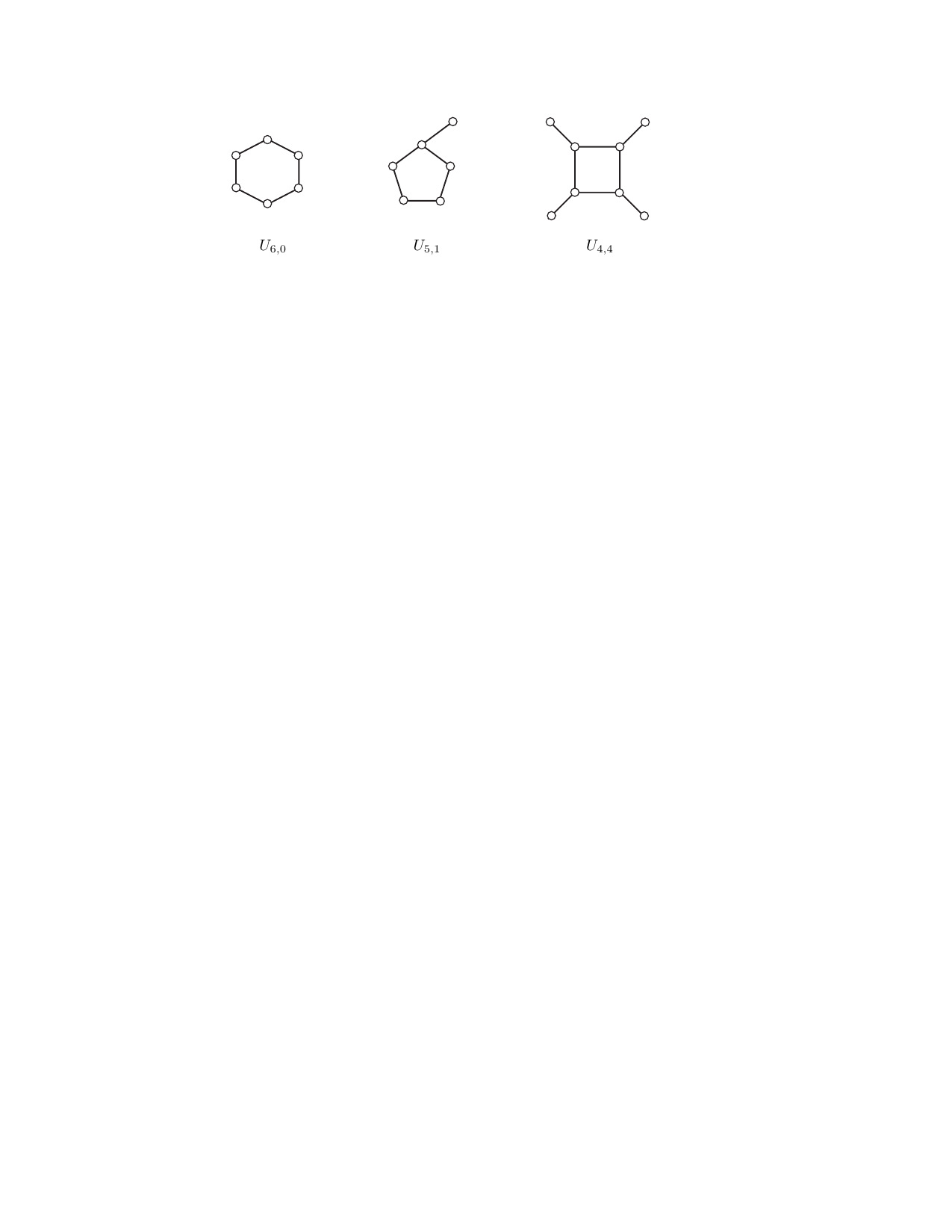}\\
  \caption{$U_{6,0},U_{5,1}$ and $U_{4,4}$.}\label{fig3}
\end{figure}

\begin{figure}[h]
	\centering
	\includegraphics[width=100mm]{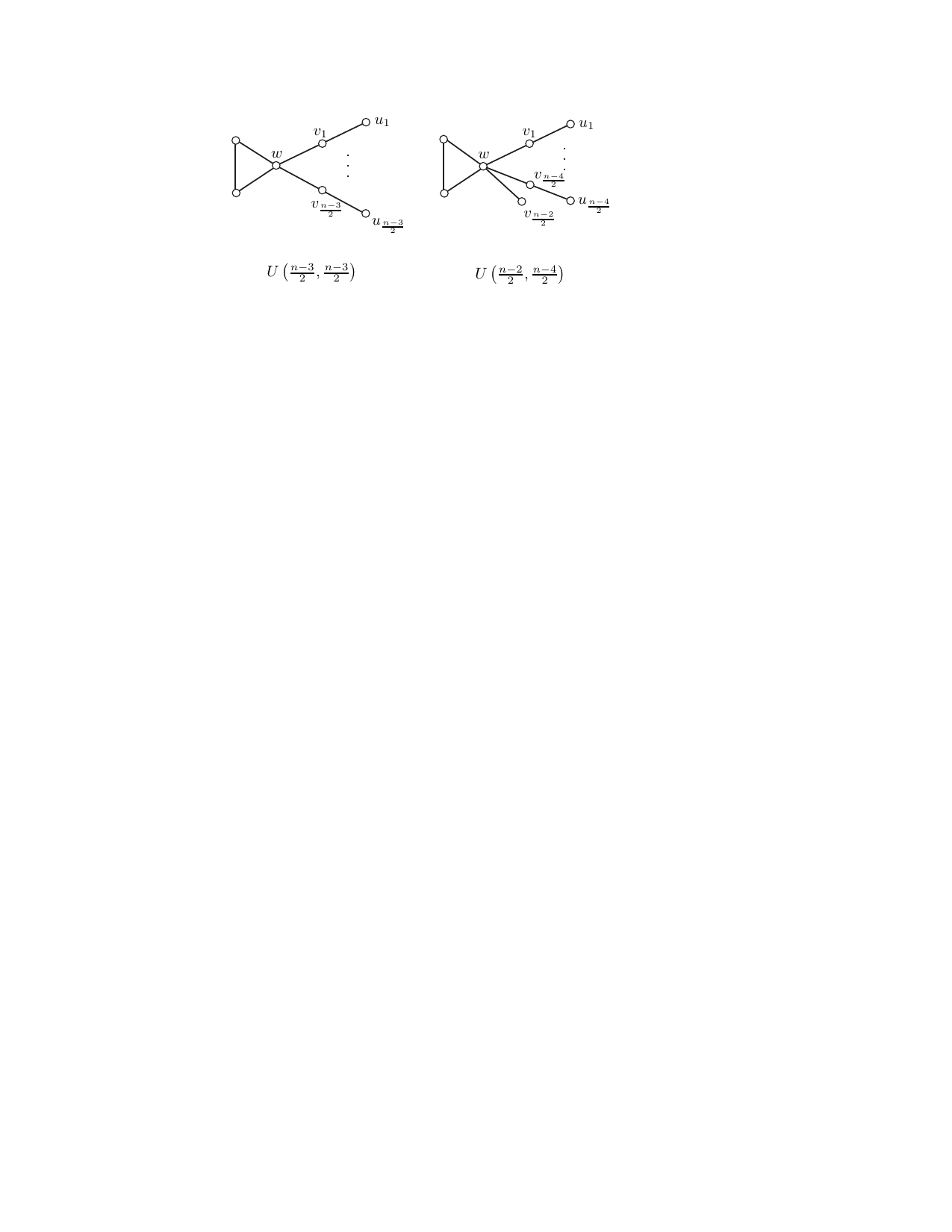}\\
	\caption{The unicyclic graphs $U\left(\frac{n-3}{2},\frac{n-3}{2}\right)$ and $U\left(\frac{n-2}{2},\frac{n-4}{2}\right)$.}\label{fig0}
\end{figure}

\section{Preliminaries}
For a simple graph $G$, the vertex set and edge set are defined as $V=V(G)$ and $E=E(G)$, respectively. A {\it unicyclic} graph contains exactly one cycle. A {\it caterpillar} graph is a tree in which deleting all pendant vertices leaves a path. Let $P_n$ represent the path and $C_n$ represent the cycle. For $v\in V(G)$, we denote by $N_G(v)$ the neighbour set of $v$, i.e., $N_G(v)=\{u\in V|uv\in E\}$. The {\it degree} of $v$ in $G$ is defined as $d_G(v)=|N_G(v)|$, and the closed neighbour set $N_G[v]$ is defined as $N_G[v]=\{v\}\cup N_G(v)$. We will abbreviate $N_G(v)$, $d_G(v)$ and $N_G[v]$ as $N(v)$, $d(v)$ and $N[v]$, respectively, if there is no possibility of confusion.

If a vertex of $G$ with degree 1, it can be called a {\it leaf}. A {\it support vertex} in $G$ is the neighbor of a leaf.
Given a vertex set $U$, we define $G-U$ as the graph obtained by removing the vertices set $U$ from $G$, and $G[U]$ as the subgraph induced by $U$. In this paper, we use $G-u$ to represent $G-\{u\}$. Since $\phi(G)=|MD(G)|$, we have
\begin{align*}
&\phi(G,\overline{v}) \ =|\{S:\ S\in MD(G) \ \textup{and} \ v\notin S\}|,\\
&\phi(G,v)\ =|\{S:\ S\in MD(G) \ \textup{and} \ v\in S\}|,\\
&\phi(G,v^{0})=|\{S:\ S\in MD(G),v\in S \ \textup{and} \ d_{G[S]}(v)=0\}|,\\
&\phi(G,v^{1})=|\{S:\ S\in MD(G),v\in S \ \textup{and} \ d_{G[S]}(v)=1\}|.
\end{align*}

It is obvious that,
$$
\phi (G)=\phi (G,v)+\phi(G,\overline{v})=\phi (G,v^{0})+\phi (G,v^{1})+\phi (G,\overline{v}).
$$

Further, for $\{x_1,\ldots,x_s\}\subseteq V(G)$, say $\widehat{x_i}\in \{\overline{x_i},x_i,x_i^0,x_i^1\}$, we denote
\begin{align*}
\phi(G,\widehat{x_1}\widehat{x_2}\ldots\widehat{x_s})
=|\bigcap_{i=1}^{s} MD(G,\widehat{x_i})|.
\end{align*}

The subsequent lemmas will be frequently utilized in the following discussions.

\begin{lem}\label{lemc}
	Consider the union of two disjoint graphs $G$ and $H$, denoted by $G\cup H$. It follows that $\phi(G\cup H)=\phi(G)\phi(H)$.
\end{lem}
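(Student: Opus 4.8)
The plan is to establish a bijection between $MD(G \cup H)$ and the Cartesian product $MD(G) \times MD(H)$, which will immediately yield the multiplicative identity for cardinalities. First I would record the basic structural fact that, since $G$ and $H$ are disjoint, no edge of $G \cup H$ joins a vertex of $V(G)$ to a vertex of $V(H)$. Hence for any $S \subseteq V(G \cup H)$, writing $S_G = S \cap V(G)$ and $S_H = S \cap V(H)$, the induced subgraph $(G\cup H)[S]$ is precisely the disjoint union $G[S_G] \cup H[S_H]$. Since the maximum degree of a disjoint union is the larger of the maximum degrees of its parts, $S$ is a dissociation set of $G\cup H$ if and only if $S_G$ is a dissociation set of $G$ and $S_H$ is a dissociation set of $H$.

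Next I would upgrade this equivalence from ``dissociation set'' to ``maximal dissociation set''. The key observation is that adding a vertex $v \in V(G)\setminus S_G$ to $S$ affects only the $G$-component, because $v$ has no neighbour in $H$; thus $S \cup \{v\}$ is a dissociation set exactly when $S_G \cup \{v\}$ is a dissociation set of $G$, and symmetrically for vertices of $H$. From this ``adding a vertex'' criterion both implications follow: if $S$ is maximal but, say, $S_G$ fails to be maximal in $G$, then some $v$ could be adjoined to $S_G$ while preserving the dissociation property, and then $S \cup \{v\}$ would be a strictly larger dissociation set of $G\cup H$, a contradiction; conversely, if both $S_G$ and $S_H$ are maximal, no vertex of either component can be added to $S$, so $S$ is maximal. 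This proves $S \in MD(G\cup H)$ if and only if $S_G \in MD(G)$ and $S_H \in MD(H)$.

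Finally, the assignment $S \mapsto (S_G, S_H)$ is a bijection from subsets of $V(G\cup H)$ onto pairs $(A,B)$ with $A\subseteq V(G)$ and $B\subseteq V(H)$, its inverse being $(A,B)\mapsto A\cup B$. By the previous paragraph this restricts to a bijection $MD(G\cup H) \to MD(G)\times MD(H)$, and therefore $\phi(G\cup H) = |MD(G\cup H)| = |MD(G)|\cdot|MD(H)| = \phi(G)\phi(H)$.

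I do not expect any deep obstacle, as the statement is really a structural observation about disjoint unions rather than a result with genuine combinatorial content. The single point that requires care is the maximality equivalence: one must use explicitly that no edge crosses between the two components, so that maximality can be tested component by component. This is exactly where an informal argument could slip, so I would state the component-wise ``adding a vertex'' criterion before invoking it, rather than asserting the product formula directly.
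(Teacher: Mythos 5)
Your proof is correct. The paper actually states this lemma without any proof, treating it as an obvious structural fact, so there is nothing to compare against; your bijection $S \mapsto (S \cap V(G),\, S \cap V(H))$, together with the explicit check that maximality can be verified component by component (precisely because no edge crosses between $G$ and $H$), is exactly the standard justification and fills in the omitted argument cleanly.
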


\begin{lem}\label{jun} \cite{jun}
	For $n\geq 3$ and any tree $T$ of order $n$, we have $\phi(T)\geq \lceil \frac{n}{2}\rceil+1$, with equality if and only if $T=T\left(\frac{n}{2},\frac{n-2}{2}\right)$ for even $n$ and $T\in \{T\left(\frac{n-1}{2},\frac{n-1}{2}\right),T\left(\frac{n+1}{2},\frac{n-3}{2}\right)\}$ for odd $n$ (see Figure \ref{fig1}).
\end{lem}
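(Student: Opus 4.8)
The plan is to prove the bound together with its equality case by strong induction on $n$, the engine being a family of deletion recurrences for the refined counts $\phi(T,\overline{w})$, $\phi(T,w^{0})$, $\phi(T,w^{1})$ at a distinguished vertex $w$. Two elementary facts keep the recurrences manageable: a support vertex is never isolated inside a maximal dissociation set, so $\phi(T,u^{0})=0$ whenever $u$ carries a pendant leaf; and Lemma~\ref{lemc}, which I invoke whenever a deletion disconnects the tree. The base cases $n=3,4$ are immediate: $P_{3}$ gives $\phi=3=\lceil 3/2\rceil+1$, while on four vertices $P_{4}$ gives $3$ and the star $K_{1,3}$ gives $4$, so $P_{4}$ is the unique minimiser. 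Throughout I recall from \cite{jun} that $T(p,q)$ is the spider built from a centre by attaching $p-q$ legs of length $1$ and $q$ legs of length $2$, so that $|V(T(p,q))|=p+q+1$.

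For the inductive step ($n\ge5$) I choose a leaf $v$ that ends a longest path and whose support $u$ has minimum degree among all such leaves; writing $w$ for the neighbour of $u$ on the path, every neighbour of $u$ other than $w$ is then a leaf. If $\deg(u)=2$ we have a pendant cherry $w-u-v$, and I delete it to form $T'=T-\{u,v\}$, a tree on $n-2$ vertices. Since $\lceil(n-2)/2\rceil+1=\lceil n/2\rceil$, it suffices to prove $\phi(T)\ge\phi(T')+1$; I will read this from the cherry recurrence together with $\phi(T',\overline{w})+\phi(T',w^{1})\ge1$, which holds for every tree on at least two vertices. If instead $\deg(u)\ge3$, then by minimality every leaf ending a longest path has a support of degree at least $3$. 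Unless $T$ is a star (handled directly, since $\phi(K_{1,n-1})=n>\lceil n/2\rceil+1$ for $n\ge4$), I delete the pendant star at $u$ and show by a direct estimate that $\phi(T)$ strictly exceeds $\lceil n/2\rceil+1$; thus such trees (brooms and double brooms, for instance) are never extremal.

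The main obstacle is to establish the cherry recurrence correctly, because maximality is a global condition that interferes with the boundary at $w$: removing $\{u,v\}$ can turn a maximal set of $T$ into a set of $T'$ that fails maximality only in the neighbourhood of $w$ (a neighbour of $w$ may become addable once $u$ is gone), and --- as one already sees on $P_{3}$ --- not every maximal set of $T'$ containing $w$ arises from a maximal set of $T$. Resolving this is what forces a vector-valued induction. The three admissible patterns of $S\cap\{u,v\}$ (namely $u,v\in S$; $u\in S$ paired with $w$ and $v\notin S$; and $v\in S$, $u\notin S$ with $w\in S$) will each be matched, according to the state of $w$, to a prescribed marked state of $w$ in $T'$, and where a single mark fails to close the system I will track the two-vertex counts $\phi(T,\widehat{u}\,\widehat{w})$ permitted by the notation of the excerpt, so that the boundary defects are absorbed into the $\overline{w}/w^{0}/w^{1}$ bookkeeping rather than lost. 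The resulting identities should assemble into a fixed transfer system from which both $\phi(T)\ge\phi(T')+1$ and a description of its equality locus follow.

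For the characterisation I run the induction in reverse. Equality $\phi(T)=\lceil n/2\rceil+1$ forces every inequality along the reduction to be tight; this rules out the degree-$\ge3$ branch, pins $\deg(u)=2$, and saturates the transfer system, which means $T'$ is itself extremal and $w$ realises the extremal marks. Iterating, each tight step strips a length-$2$ leg and sends $T(p,q)$ to $T(p-1,q-1)$, preserving $p-q$; since every reduct must again be extremal, the chain may never reach a star $K_{1,m}$ with $m\ge3$, which caps $p-q$ at $1$ for even $n$ and at $2$ for odd $n$ and makes the chain terminate at $P_{4}$ or $P_{3}$. As $n=p+q+1$ fixes the parity of $p-q$, tracing back yields exactly $p-q=1$ for even $n$ --- the single tree $T(\tfrac{n}{2},\tfrac{n-2}{2})$ --- and $p-q\in\{0,2\}$ for odd $n$ --- the two trees $T(\tfrac{n-1}{2},\tfrac{n-1}{2})$ and $T(\tfrac{n+1}{2},\tfrac{n-3}{2})$, which coincide with $P_{3}$ at $n=3$. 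I finish by checking, directly from the transfer system, that these spiders attain the bound, giving the converse. Besides the recurrence itself, the remaining difficulty is this equality bookkeeping: one must verify that tightness propagates through every branch and excludes all non-extremal local configurations, in particular every spider with $p-q$ too large.
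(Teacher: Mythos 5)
First, a point of comparison: the paper itself does not prove this lemma --- it is imported verbatim from \cite{jun} --- so there is no in-paper proof to measure your attempt against. The closest internal analogue of your strategy is Lemma \ref{G-uv} together with Case 3, where the authors prove the cherry-deletion inequality $\phi(G)\geq\phi(G-\{u,v\})+1$ by splitting $\phi$ as $\phi(\cdot,w^0)+\phi(\cdot,w^1)+\phi(\cdot,\overline{w})$ and comparing term by term. Your plan is exactly this technique transplanted to trees, and in outline it is sound: the base cases are correct, choosing a longest-path leaf whose support has minimum degree correctly routes the candidate extremal trees $T(p,q)$ into the degree-2 branch, and the endgame bookkeeping (stripping a length-2 leg preserves $p-q$, parity of $n$ forces $p-q=1$ for even $n$ and $p-q\in\{0,2\}$ for odd $n$, stars are excluded) is right.

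The difficulty is that the proposal defers precisely the steps that carry the mathematical content, so it is a roadmap rather than a proof. (i) The cherry recurrence $\phi(T)\geq\phi(T-\{u,v\})+1$ is never established: you describe the obstruction (maximality is global, so deleting $\{u,v\}$ can break maximality near $w$) and then assert that the identities ``should assemble into a fixed transfer system,'' which is a statement of intent. The actual proof needs the three claims, as in Lemma \ref{G-uv}: $\phi(T,w^0)=\phi(T',w^0)$, $\phi(T,w^1)\geq\phi(T',w^1)+1$, and $\phi(T,\overline{w})\geq\phi(T',\overline{w})$, each via an explicit injection or bijection; none of these maps is exhibited. (ii) The branch in which every longest-path leaf has a support of degree at least $3$ is dismissed with ``show by a direct estimate,'' but no estimate is given; this case (brooms, double brooms) genuinely requires an argument. (iii) Most seriously, the equality characterization needs the converse of the transfer analysis: knowing that $T'$ is extremal and the recurrence is tight, you must show the cherry can only be attached at the \emph{center} of $T'$, i.e.\ you must verify strict inequality for attachment at a leaf, at the middle of a length-2 leg, and at the end of a length-2 leg --- exactly the computation the paper performs in Subcases 1--4 of Case 3 for the unicyclic analogue. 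The phrase ``$w$ realises the extremal marks'' assumes this placement analysis instead of performing it. Until (i)--(iii) are written out, you have the correct skeleton --- the same one used in \cite{jun} and in the paper's Lemma \ref{G-uv} --- but not a proof.
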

\begin{figure}[h]
	\centering
	\includegraphics[width=121mm]{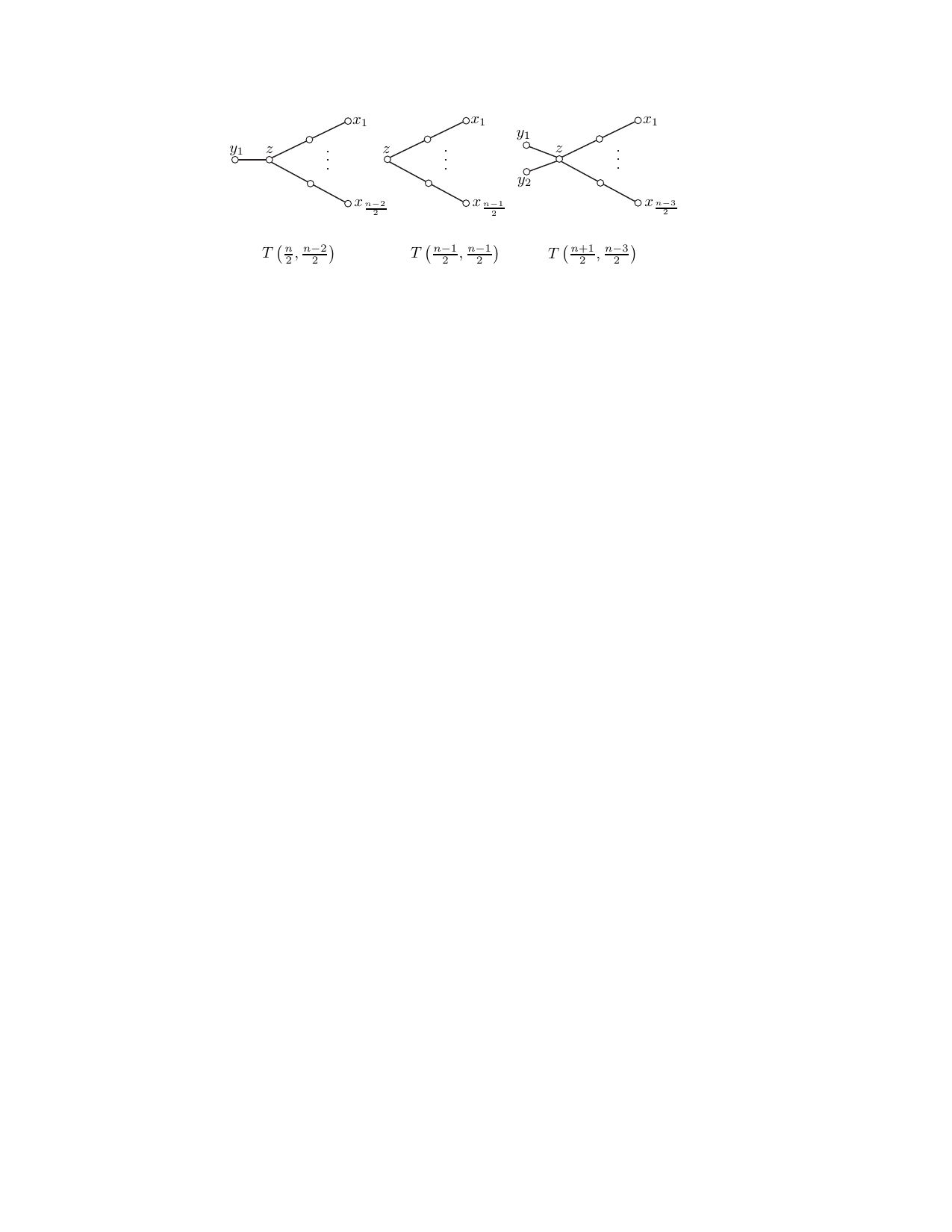}\\
	\caption{The trees $T\left(\frac{n}{2},\frac{n-2}{2}\right)$, $T\left(\frac{n-1}{2},\frac{n-1}{2}\right)$ and $T\left(\frac{n+1}{2},\frac{n-3}{2}\right)$.}\label{fig1}
\end{figure}

By this lemma, the following corollaries can be deduced.

\begin{cor}\label{cor}
For a path $P_n$, $\phi(P_n)\geq \lceil \frac{n}{2}\rceil+1$ with equality if and only if $n=3,4,5$.

\end{cor}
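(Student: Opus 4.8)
The plan is to deduce everything from Lemma~\ref{jun}, since a path is a tree. First I would record the lower bound for free: $P_n$ is a tree of order $n$, so Lemma~\ref{jun} immediately gives $\phi(P_n)\ge\lceil n/2\rceil+1$. This half requires no extra work, so the real content is the equality case.

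For the equality case I would use the full strength of Lemma~\ref{jun}, not just its bound: equality $\phi(P_n)=\lceil n/2\rceil+1$ holds if and only if $P_n$ is isomorphic to one of the listed extremal trees, namely $T(\frac{n}{2},\frac{n-2}{2})$ for even $n$ and $T(\frac{n-1}{2},\frac{n-1}{2})$ or $T(\frac{n+1}{2},\frac{n-3}{2})$ for odd $n$. The whole question thus reduces to a structural one: for which $n$ is the path $P_n$ one of these extremal trees? The key fact I would extract from the definition of $T(p,q)$ (as given in section~1 of \cite{jun}) is that $T(p,q)$ is built from a single \emph{center} vertex by attaching $p-q$ pendant leaves and $q$ pendant copies of $P_2$, so it has order $p+q+1$, its center has degree exactly $p$, and every other vertex has degree at most $2$. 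Consequently $T(p,q)$ is a path if and only if $p\le 2$.

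With this observation the equality analysis becomes a short parameter check. For even $n$ the only candidate $T(\frac{n}{2},\frac{n-2}{2})$ has center degree $n/2$, which is $\le 2$ only for $n\le 4$; here $T(2,1)=P_4$. For odd $n$ the two candidates have center degrees $(n-1)/2$ and $(n+1)/2$, the smaller of which is $\le 2$ only for $n\le 5$, yielding $T(1,1)=P_3$ and $T(2,2)=P_5$, whereas for $n\ge 7$ both candidates have center degree $\ge 3$. Hence for $n\ge 6$ no extremal tree is a path, so the inequality is strict, and $P_n$ coincides with an extremal tree exactly when $n\in\{3,4,5\}$. To close the argument I would also confirm the three base cases by directly listing the maximal dissociation sets, verifying $\phi(P_3)=\phi(P_4)=3$ and $\phi(P_5)=4$, matching $\lceil n/2\rceil+1$.

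The main obstacle, such as it is, lies entirely in making the reduction precise: one must invoke the \emph{if and only if} part of Lemma~\ref{jun} and then correctly read off from the definition of $T(p,q)$ that the center degree equals $p$ and therefore governs whether the tree is a path. Once that center-degree fact is pinned down the remainder is a finite verification with no genuine difficulty; the only real risk is a bookkeeping slip when matching small paths to the right parameters, for instance noting that $P_3$ equals both $T(1,1)$ and $T(2,0)$, and that $P_5=T(2,2)$ while the other extremal tree $T(3,1)$ of order $5$ is a genuine spider rather than a path.
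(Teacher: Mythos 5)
Your proposal is correct and follows exactly the route the paper intends: the paper states this corollary as an immediate consequence of Lemma~\ref{jun} (a path is a tree, and equality forces $P_n$ to be one of the extremal trees $T(p,q)$, which happens precisely when the center degree $p\le 2$, i.e., $n\in\{3,4,5\}$). Your write-up simply makes explicit the parameter check that the paper leaves to the reader, including the correct identifications $P_3=T(1,1)=T(2,0)$, $P_4=T(2,1)$, $P_5=T(2,2)$.
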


\begin{cor}\label{cor1}
Let $G$ be a caterpillar of order $n$. Then $\phi(G)\geq \lceil \frac{n}{2}\rceil+1$ with equality if and only if $G\in \{T(1,1),T(2,1),T(2,2),T(3,1),T(3,2),T(4,2)\}$.

\end{cor}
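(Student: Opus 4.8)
The plan is to read the bound straight off Lemma \ref{jun} and then to obtain the equality cases by intersecting the known list of extremal trees with the caterpillar condition. Since every caterpillar is in particular a tree, Lemma \ref{jun} applied to $G$ gives $\phi(G)\geq \lceil n/2\rceil+1$ at once, so the inequality needs no separate argument; the only real content is the characterization of equality.

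For the equality part I would first recall, from Lemma \ref{jun}, that a tree $G$ of order $n$ satisfies $\phi(G)=\lceil n/2\rceil+1$ precisely when $G=T(n/2,(n-2)/2)$ for even $n$, or $G\in\{T((n-1)/2,(n-1)/2),\,T((n+1)/2,(n-3)/2)\}$ for odd $n$. Consequently a caterpillar attains equality if and only if it is simultaneously one of these extremal trees and a caterpillar. Because the extremal trees are already classified, the whole problem collapses to a purely structural question: which of the trees $T(p,q)$ are caterpillars?

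The key step is thus to extract the caterpillar criterion from the shape of $T(p,q)$. Recall that $T(p,q)$ (with $p\geq q$) is the spider with a center $c$ to which one attaches $q$ paths of length $2$ and $p-q$ pendant edges, so that its order is $1+2q+(p-q)=p+q+1$, consistent with all of the instances above. The pendant vertices of $T(p,q)$ are exactly the $p-q$ leaves at $c$ together with the $q$ far endpoints of the length-$2$ legs; deleting all of them leaves precisely the star $K_{1,q}$ formed by $c$ and the $q$ midpoints of those legs. Since $K_{1,q}$ is a path if and only if $q\leq 2$, I obtain the clean criterion that $T(p,q)$ is a caterpillar if and only if $q\leq 2$. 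This is the step I expect to be the main obstacle, in the sense that it hinges on pinning down the definition of $T(p,q)$ from \cite{jun} precisely enough to justify the reduction to "$q\leq 2$''; once that criterion is established, nothing further about $\phi$ needs to be computed.

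It then remains only to solve $q\leq 2$ inside each extremal family. For even $n=2p$ the tree $T(p,p-1)$ forces $p\leq 3$, yielding $T(2,1)$ and $T(3,2)$ (orders $4$ and $6$). For odd $n=2p+1$ the tree $T(p,p)$ forces $p\leq 2$, yielding $T(1,1)$ and $T(2,2)$ (orders $3$ and $5$), while $T(p+1,p-1)$ forces $p\leq 3$, yielding $T(2,0)$, $T(3,1)$ and $T(4,2)$ (orders $3$, $5$ and $7$); the remaining cases give $n<3$ and are discarded. Observing that $T(2,0)\cong T(1,1)\cong P_3$, the distinct extremal caterpillars are exactly $T(1,1),T(2,1),T(2,2),T(3,1),T(3,2),T(4,2)$, which is the asserted list. (As a sanity check, the analogous criterion for paths is $p\leq 2$, which recovers $P_3,P_4,P_5$ and is consistent with Corollary \ref{cor}.)
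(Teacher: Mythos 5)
Your proposal is correct and matches the paper's intended argument: the paper gives no explicit proof, stating only that the corollary "can be deduced" from Lemma \ref{jun}, and your proof supplies exactly the missing details — the bound is inherited because every caterpillar is a tree, and the equality cases follow by checking which extremal trees $T(p,q)$ are caterpillars. Your criterion (deleting the leaves of $T(p,q)$ leaves a subtree of the star $K_{1,q}$, so $T(p,q)$ is a caterpillar if and only if $q\leq 2$) and the resulting enumeration, including the identification $T(2,0)\cong T(1,1)$, are accurate and yield precisely the stated list.
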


\begin{lem}\label{lem0} \cite{jun}
If $u$ is a support vertex of a graph $G$ or $MD(G,u^0)= \emptyset$, then $\phi(G,u^0)=0$.
\end{lem}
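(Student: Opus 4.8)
The plan is to treat the two hypotheses separately, since the second is essentially definitional and the first carries all the content. Recall that, by the notational conventions set up above, $\phi(G,u^0)=|MD(G,u^0)|$, where $MD(G,u^0)=\{S\in MD(G): u\in S \text{ and } d_{G[S]}(u)=0\}$. Hence if $MD(G,u^0)=\emptyset$ then $\phi(G,u^0)=|\emptyset|=0$ immediately, which disposes of the second alternative without any further work.

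The substantive case is when $u$ is a support vertex, and I would argue it by contradiction. Suppose $\phi(G,u^0)\neq 0$, so there exists some $S\in MD(G)$ with $u\in S$ and $d_{G[S]}(u)=0$. Since $u$ is a support vertex, it has a leaf neighbour $w$, i.e. $N(w)=\{u\}$. The condition $d_{G[S]}(u)=0$ means that no neighbour of $u$ lies in $S$; in particular $w\notin S$. The key step is then to form $S'=S\cup\{w\}$ and to verify that $S'$ is still a dissociation set properly containing $S$, which will contradict the maximality of $S$.

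To carry out that verification I would inspect the induced degree of every vertex of $G[S']$. The new vertex $w$ has its unique neighbour $u$ inside $S'$, so $d_{G[S']}(w)=1$; the vertex $u$, previously isolated in $G[S]$, acquires exactly the one neighbour $w$, so $d_{G[S']}(u)=1$; and every other vertex $x\in S$ keeps the degree it had in $G[S]$, because $w$ is adjacent only to $u$ and hence to no such $x$. Thus $G[S']$ has maximum degree at most $1$, so $S'$ is a dissociation set with $S\subsetneq S'$, contradicting the maximality of $S$. Therefore no such $S$ exists and $\phi(G,u^0)=0$.

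I do not expect a genuine obstacle here: the whole argument rests on the single observation that a leaf can always be absorbed into a maximal dissociation set in which its support vertex sits isolated, because attaching a leaf raises only the degrees of the leaf itself and of its support vertex. The only point that requires a moment's care is confirming that no third vertex's induced degree is disturbed, and this is immediate from the fact that a leaf has exactly one incident edge.
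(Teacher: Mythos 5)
Your proof is correct. Note that the paper itself gives no proof of this lemma --- it is quoted from \cite{jun} as a known result --- so there is nothing internal to compare against; your argument (if $u$ sits isolated in $G[S]$, absorb its leaf neighbour $w$ into $S$ to contradict maximality, plus the trivial observation for the case $MD(G,u^0)=\emptyset$) is the natural and complete justification of the statement.
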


\begin{lem}\label{lemG-u,u}
Let $G$ be a graph and $u\in V(G)$. Then $\phi(G-u)\geq \phi(G,\overline{u})$ and $\phi (G-N[u])\geq \phi (G,u^0)$.
\end{lem}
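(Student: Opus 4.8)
\section*{Proof proposal for Lemma \ref{lemG-u,u}}

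The plan is to prove each inequality by exhibiting an explicit injection from the family of maximal dissociation sets counted on the right-hand side into $MD$ of the reduced graph on the left-hand side; since the maps are plainly injective, comparing cardinalities yields the claimed bounds.

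For the first inequality I would consider the set $\mathcal{A}=\{S\in MD(G): u\notin S\}$, so that $|\mathcal{A}|=\phi(G,\overline{u})$, and send each $S\in\mathcal{A}$ to itself, now regarded as a vertex subset of $G-u$. Because $G-u$ is an induced subgraph of $G$ and $u\notin S$, we have $(G-u)[S]=G[S]$, so $S$ is again a dissociation set of $G-u$. The point to check is maximality: for any $v\in V(G-u)\setminus S$ we also have $v\in V(G)\setminus S$, and maximality of $S$ in $G$ produces a vertex of degree at least $2$ in $G[S\cup\{v\}]$; as $u\notin S\cup\{v\}$, this same obstruction survives in $(G-u)[S\cup\{v\}]=G[S\cup\{v\}]$, so $S$ cannot be enlarged in $G-u$. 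Hence $S\in MD(G-u)$, the map $S\mapsto S$ is injective, and $\phi(G-u)\geq\phi(G,\overline{u})$.

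For the second inequality I would work with $\mathcal{B}=\{S\in MD(G): u\in S,\ d_{G[S]}(u)=0\}$, whose cardinality is $\phi(G,u^{0})$, and use the map $S\mapsto S\setminus\{u\}$. Since $u$ is isolated in $G[S]$, no neighbour of $u$ lies in $S$, so $S\cap N[u]=\{u\}$ and $S\setminus\{u\}\subseteq V(G-N[u])$; as before this restriction is a dissociation set of $G-N[u]$. For maximality, take $v\in V(G-N[u])\setminus(S\setminus\{u\})$; then $v\notin S$ and $v\notin N[u]$, so maximality of $S$ in $G$ yields a vertex of degree at least $2$ in $G[S\cup\{v\}]$. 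The crucial observation is that $u$ is isolated in $G[S\cup\{v\}]$, since it has no neighbour in $S$ and $v\notin N(u)$; deleting $u$ therefore leaves the degrees of all other vertices unchanged, and the obstruction persists in $G[(S\setminus\{u\})\cup\{v\}]=(G-N[u])[(S\setminus\{u\})\cup\{v\}]$. Thus $S\setminus\{u\}\in MD(G-N[u])$, the map is injective because every set in $\mathcal{B}$ contains $u$, and $\phi(G-N[u])\geq\phi(G,u^{0})$.

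The routine verifications above (that induced subgraphs restrict correctly and that an isolated vertex can be deleted without altering other degrees) are immediate; the only genuinely delicate step is confirming that maximality is preserved, and in the second part this hinges entirely on the hypothesis $d_{G[S]}(u)=0$, which guarantees that the degree-two witness forbidding an extension never involves the deleted vertex $u$. I do not anticipate trouble with degenerate cases such as $G-N[u]$ being empty, where the unique empty maximal dissociation set is correctly counted.
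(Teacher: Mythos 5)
Your proposal is correct and takes essentially the same approach as the paper: the paper's proof consists of exactly the two injections $S\mapsto S$ and $S\mapsto S\setminus\{u\}$, stated without detail, while you additionally verify that maximality is preserved (in particular the key point that $d_{G[S]}(u)=0$ keeps $u$ isolated in any attempted extension). No gaps; your write-up is simply a fully detailed version of the paper's argument.
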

\begin{proof}
Given any maximal dissociation set $S\in MD(G,\overline{u})$, it is also a maximal dissociation set of $G-u$. Thus, $\phi (G-u)\geq \phi (G,\overline{u})$.
Similarly,
given any $S'\in MD(G,u^0)$, $S'\backslash \{u\}\in MD(G-N[u])$. Thus, $\phi (G-N[u])\geq \phi (G,u^0)$.
\end{proof}

\section{Proof of Theorem \ref{th1}}

We are now in a position to prove our main result. We break up the proof into three cases as follows.

{\bf Case 1.} $G\in \mathscr{U}_{r,t}$ and $t=0$.

For a cycle $C_n$, let $v\in V(C_n)$, $C_n-v$ can be denoted by $P_{n-1}$.
When $n=3$, the only unicyclic graph is $C_3$ and $\phi(C_3)=3=\lfloor\frac{3}{2}\rfloor +2$. Since $C_3=U_{3,0}$, the theorem follows for $n=3$. Then we assume that $n\geq 4$ and first present the following lemma.
\begin{lem}
 $\phi(C_n)\geq \phi(P_{n-1})+1$ with equality if and only if $n=6$.
\end{lem}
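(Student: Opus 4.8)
The plan is to fix a vertex $v$ on the unique cycle, use that $C_n-v=P_{n-1}$, and build an explicit injection from $MD(P_{n-1})$ into $MD(C_n)$ that misses at least one set. Writing $\phi(C_n)=\phi(C_n,v)+\phi(C_n,\overline{v})$, I first note that every $S\in MD(P_{n-1})$ is a dissociation set of $C_n$ as well (the edges of $C_n$ incident to $v$ are irrelevant when $v\notin S$), and it is of exactly one of two types: either $v$ is already blocked, so $S\in MD(C_n,\overline v)$, or $v$ can be added, in which case $S\cup\{v\}\in MD(C_n,v)$. Sending $S$ to itself in the first case and to $S\cup\{v\}$ in the second defines a map $\Phi\colon MD(P_{n-1})\to MD(C_n)$; the first-type images avoid $v$ while the second-type images contain $v$, so $\Phi$ is injective and $\phi(C_n)\ge\phi(P_{n-1})$. (That $S\cup\{v\}$ is indeed maximal follows because adjoining $v$ only adds constraints, so every vertex blocked for $S$ in $P_{n-1}$ stays blocked in $C_n$; this is the refinement of the deletion estimate in Lemma~\ref{lemG-u,u}.)

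To sharpen this to $\phi(C_n)\ge\phi(P_{n-1})+1$, I would exhibit one maximal dissociation set $T$ of $C_n$ that is not in the image of $\Phi$. Since every $v$-avoiding maximal set already lies in the image, such a $T$ must contain $v$ and satisfy that $T\setminus\{v\}$ is \emph{not} maximal in $P_{n-1}$. The mechanism here is that deleting $v$ can unblock only a neighbour of $v$ or a neighbour of the vertex matched to $v$; so I look for a maximal set of $C_n$ in which removing $v$ frees one of these vertices. Concretely, matching $v$ to one of its neighbours and completing greedily to a maximal set usually works, and when that configuration happens to stay maximal one instead uses a set in which $v$ is isolated and a vertex at distance two from $v$ sits in the set with degree $0$; this produces such a $T$ for every $n\ge4$, with the small cases $n\in\{4,5,6\}$ checked by hand. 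This yields the strict inequality.

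For the equality statement the relevant quantity is $E:=\phi(C_n)-\phi(P_{n-1})$, which by the above equals the number of maximal dissociation sets $T$ of $C_n$ with $v\in T$ and $T\setminus\{v\}$ not maximal in $P_{n-1}$. I would compute $E$ by decomposing $\phi(C_n,v)=\phi(C_n,v^{0})+\phi(C_n,v^{1})$ and analysing, case by case (with $v$ isolated, or matched to either of its two neighbours), exactly when the deletion of $v$ unblocks a neighbour. The expected outcome is $E\ge 2$ for all $n\ge4$ with $n\ne6$ and $E=1$ for $n=6$: when $n=6$ the residual sub-path left after deleting $v$ or $N[v]$ is so short that the two potential extra sets either coincide or one is forced to remain maximal, whereas for $n\ge 7$ two independent extra sets survive. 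Hence equality holds precisely for $n=6$.

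The routine parts are the injection and the check that $S\cup\{v\}$ is maximal, which follow directly from the definitions and from Lemma~\ref{lemG-u,u}. The main obstacle will be the equality analysis, namely proving that exactly one extra maximal set appears when $n=6$ while at least two appear for every other $n\ge4$; this needs a careful study of how $v$ sits inside a maximal dissociation set and of the short sub-paths $P_{n-3}$ and $P_{n-4}$ arising after removing $v$ or $N[v]$, in conjunction with the equality cases recorded in Corollary~\ref{cor}.
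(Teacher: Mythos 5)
Your framework is exactly the paper's: fix a vertex $v$ on the cycle, inject $MD(P_{n-1})$ into $MD(C_n)$ (each $S$ maps to itself if it stays maximal in $C_n$, and to $S\cup\{v\}$ otherwise), and observe that the surplus $E=\phi(C_n)-\phi(P_{n-1})$ counts exactly the sets $T\in MD(C_n,v)$ whose residue $T\setminus\{v\}$ fails to be maximal in $P_{n-1}$. This part of your write-up is correct, and your description of the complement of the image (including the fact that every $v$-avoiding member of $MD(C_n)$ already lies in the image) is, if anything, more explicit than what the paper records.

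The genuine gap is that the decisive quantitative claim is never proved: you state only as an ``expected outcome'' that $E\ge 2$ for all $n\ge 4$ with $n\ne 6$ and $E=1$ for $n=6$, and you yourself flag this as ``the main obstacle.'' But that claim \emph{is} the lemma: the inequality needs $E\ge 1$ and the equality characterization needs the precise dichotomy, so deferring it means nothing beyond $\phi(C_n)\ge\phi(P_{n-1})$ has been established. Your proposed construction of an extra set (``matching $v$ to one of its neighbours and completing greedily \ldots usually works'') is not an argument; ``usually'' fails precisely at $n=6$, which is the entire content of the equality statement, so the exceptional case cannot be waved away. The paper closes this step by checking $n=4,5,6$ directly and then, for $n>6$, producing two extra sets, one $S_1\in MD(C_n,v^0)$ and one $S_2\in MD(C_n,v^1)$, whose residues are non-maximal in $P_{n-1}$, which gives $E\ge 2$ there. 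Concretely, for $n=7$ with vertices $v_0=v,v_1,\dots,v_6$ one may take $S_1=\{v_0,v_2,v_3,v_5\}$ and $S_2=\{v_0,v_1,v_3,v_5\}$: both are maximal in $C_7$, and deleting $v_0$ frees $v_6$ in each case; an analogous periodic pattern along $v_2,\dots,v_{n-2}$ works for every $n\ge 7$. To turn your plan into a proof you must supply such constructions (or an equivalent counting argument) together with the finite checks at $n=4,5,6$; without them the special role of $n=6$ is unsupported.
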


\begin{proof}
It is straightforward to check that $\phi(C_4)> \phi(P_{3})+1$, $\phi(C_5)> \phi(P_{4})+1$ and $\phi(C_6)=\phi(P_{5})+1$, where $C_6=U_{6,0}$.
Then we show that $\phi(C_n)> \phi(P_{n-1})+1$ for $n>6$.

Given any $S\in MD(P_{n-1})$, $S$ is a dissociation set of $C_n$. If $S\notin MD(C_n)$, then $S'=S\cup \{v\}$ ensures its maximality in $C_n$. This implies that there exists an injection from $S$ to $S'$. On the other hand, for any $C_n$ ($n>6$), there exists at least one $S_1\in MD(C_n,v^0)$ and $S_2\in MD(C_n,v^1)$ that make $S_1\backslash \{v\}$ and $S_2\backslash \{v\}$ not maximal dissociation sets of $P_{n-1}$.
Thus, $$\phi(C_n)\geq \phi(P_{n-1})+2> \phi(P_{n-1})+1$$ for $n>6$, as desired.
\end{proof}

Since $P_{n-1}$ is a special tree with $n-1$ vertices, according to Corallary \ref{cor}, we have
$\phi(P_{n-1})\geq \left\lceil\frac{n-1}{2}\right\rceil +1$ with equality only when $n=4,5,6$.
Thus,
\begin{align*}
\phi(C_n)\geq \phi(P_{n-1})+1\geq \left\lceil\frac{n-1}{2}\right\rceil +2= \left\lfloor\frac{n}{2}\right\rfloor +2.
\end{align*}
All equalities hold if and only if $n=6$.

Above all, the unicyclic graphs $U_{3,0}$ and $U_{6,0}$ achieve the extremal value.

{\bf Case 2.} $G\in \mathscr{U}_{r,t}$ and $t\geq 1$.

Let $y$ be a leaf of $G$, $N[y]=\{x\}$ and $N(x)=\{w,y,z\}$. Denoted $G-N[y]$ by $U$ and $\phi(G-N[y])=\phi(U)$, we show the following lemma.

Since the order of $G$ is $n=r+t\geq 4$, when $n=4$, $G=U_{3,1}$. Further, $\phi(G)=4=\lfloor\frac{4}{2}\rfloor+2$, the theorem holds true for $n=4$. Now we assume $n\geq 5$.
\begin{lem}
$\phi(G)\geq \phi(G-N[y])+2$.
\end{lem}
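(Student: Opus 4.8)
The plan is to split $\phi(G)=\phi(G,y)+\phi(G,\overline{y})$ and to establish the two estimates $\phi(G,y)\ge\phi(G-N[y])$ and $\phi(G,\overline{y})\ge 2$ separately; adding them yields the lemma. Throughout I would use that $x$ is the unique support vertex of the leaf $y$, lying on the cycle with $N(x)=\{w,y,z\}$ and $w\neq z$ (since $r\ge 3$), and I write $U=G-N[y]=G-\{x,y\}$, so that $w,z\in V(U)$.

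First I would build an explicit injection $\Psi\colon MD(U)\to MD(G,y)$, giving $\phi(G,y)\ge\phi(U)$. Given $S\in MD(U)$, set $\Psi(S)=S\cup\{x,y\}$ when $w\notin S$ and $z\notin S$, and $\Psi(S)=S\cup\{y\}$ otherwise. In the first case $x$ is matched only to $y$, since its cycle-neighbours are absent, so $\Psi(S)$ is a dissociation set; no vertex of $V(U)\setminus S$ can be added (each was already blocked in $U$, and $w,z$ are now blocked by $x$), hence $\Psi(S)\in MD(G)$. In the second case $y$ is isolated and $x$ cannot be added, as it would acquire degree $2$ at the cycle-neighbour lying in $S$; since the $G$-neighbourhood of every $v\in V(U)$ inside $S\cup\{y\}$ equals its $U$-neighbourhood inside $S$, maximality transfers from $U$. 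Both outputs contain $y$, and $S$ is recovered as $\Psi(S)\setminus\{x,y\}$ or $\Psi(S)\setminus\{y\}$ according to whether $x\in\Psi(S)$, so $\Psi$ is injective. This is the step I expect to be the main obstacle, because it requires the two-case construction together with a careful maximality verification in each case.

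Second I would produce two distinct members of $MD(G,\overline{y})$. Because $y\notin S$ forces $x\in S$ with $d_{G[S]}(x)=1$ (otherwise $y$ could be added, in the spirit of Lemma \ref{lem0}), I match $x$ to $w$ in one construction and to $z$ in the other: fixing $x,w\in S$ (resp.\ $x,z\in S$) together with the vertices thereby forbidden, then greedily completing to a maximal dissociation set, yields $S_1,S_2\in MD(G,\overline{y})$. In $S_1$ one has $w\in S_1$ and $z\notin S_1$, while in $S_2$ one has $z\in S_2$ and $w\notin S_2$; since $w\neq z$ these are distinct, so $\phi(G,\overline{y})\ge 2$. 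Combining the two estimates gives $\phi(G)=\phi(G,y)+\phi(G,\overline{y})\ge\phi(G-N[y])+2$, as required.
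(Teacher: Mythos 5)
Your proof is correct, and it is organized genuinely differently from the paper's. The paper decomposes on the support vertex $x$: it writes $\phi(G)=\phi(G,x^0)+\phi(G,x^1)+\phi(G,\overline{x})$, kills $\phi(G,x^0)$ by Lemma \ref{lem0}, splits $\phi(G,x^1)$ according to the partner of $x$ to get $\phi(G,x^1)\geq \phi(U-w-z)+2$, establishes the exact identity $\phi(G,\overline{x})=\phi(U)-\phi(U,\overline{wz})$, and then still needs the comparison $\phi(U-w-z)\geq \phi(U,\overline{wz})$ (in the spirit of Lemma \ref{lemG-u,u}) to recombine everything into $\phi(U)+2$. You instead decompose on the leaf $y$. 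Since a maximal dissociation set containing $y$ either matches $x$ to $y$ or omits $x$ entirely, your class $MD(G,y)$ is precisely the union of the paper's classes $MD(G,x^1y^1)$ and $MD(G,\overline{x})$, and your two-case injection $\Psi\colon MD(U)\to MD(G,y)$ treats this union in one stroke: the case $w,z\notin S$ lands in $MD(G,x^1y^1)$, while the case $\{w,z\}\cap S\neq\emptyset$ lands in $MD(G,\overline{x})$. This regrouping dissolves both the subtraction term $\phi(U,\overline{wz})$ and the auxiliary inequality, so your argument is more self-contained and elementary. Likewise your $MD(G,\overline{y})$ is exactly the paper's $MD(G,x^1w^1)\cup MD(G,x^1z^1)$, and your two greedy completions of $\{x,w\}$ and $\{x,z\}$ (which correctly exclude $y$ and each other's cycle-neighbour, since $x$ already has degree one) deliver the same ``$+2$'' that the paper gets from $\phi(G,x^1w^1)\geq 1$ and $\phi(G,x^1z^1)\geq 1$. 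What the paper's finer decomposition buys is exact expressions for several of the terms, which is marginally more convenient when tracing when equality can hold in the subsequent extremal analysis of Case 2; your injection yields only the inequality, but that is all the lemma asserts, and your maximality verifications (violations in $U[S\cup\{v\}]$ persist in the larger induced subgraph of $G$) are sound.
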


\begin{proof}
Remember that $\phi(G)=\phi(G,x^0)+\phi(G,x^1)+\phi(G,\overline{x})$. Since $x$ is a support vertex in $G$, $\phi(G,x^0)=0$ by Lemma \ref{lem0}. Then we need to estimate $\phi(G,x^1)$ and $\phi(G,\overline{x})$. Since $\phi(G,x^1)$ satisfies
$$\phi(G,x^1)=\phi(G,x^1y^1)+\phi(G,x^1w^1)+\phi(G,x^1z^1).$$

\begin{figure}[h]
  \centering
  \includegraphics[width=120mm]{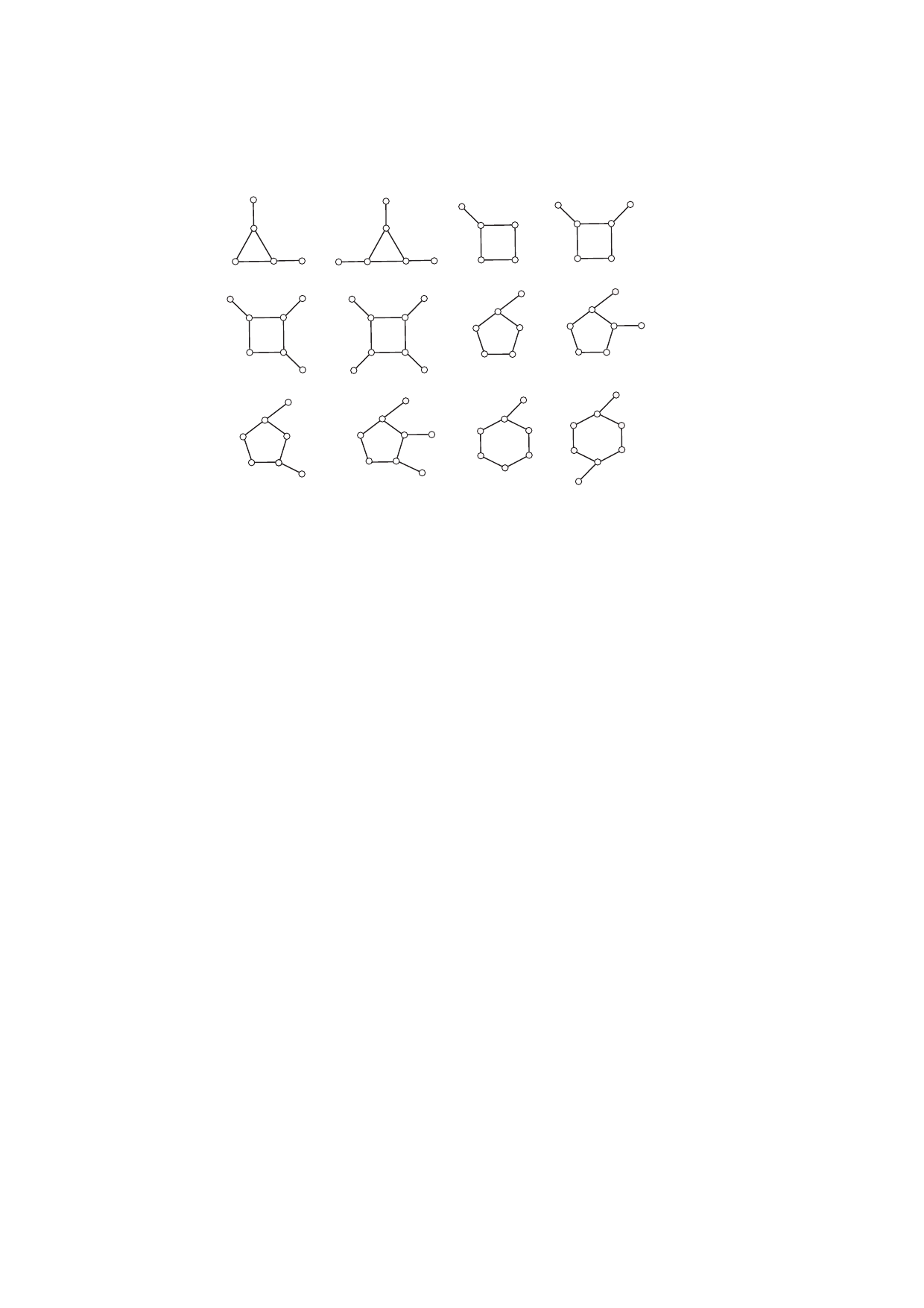}\\
  \caption{}\label{fig2}
\end{figure}

Given any $S\in MD(G,x^1y^1)$, $\{x,y\}\in S$ and $\{w,z\}\cap S=\emptyset$. Then $S\in MD(G,x^1y^1)$ only when $S\backslash \{x,y\}\in MD(U-w-z)$.

Given any $S\in MD(G,x^1w^1)$ and $S\in MD(G,x^1z^1)$, $S\neq \emptyset$ because the order of cycle is at least 3. This implies that $\phi(G,x^1w^1)\geq 1$ and $\phi(G,x^1z^1)\geq 1$.
Hence,
$$\phi(G,x^1)\geq \phi(U-w-z)+2.$$

Given any $S\in MD(G,\overline{x})$, $y\in S$ and $\{w,z\}\cap S\neq \emptyset$. Hence,
$$\phi(G,\overline{x})=\phi(U)-\phi(U,\overline{wz}).$$

Combining Lemma \ref{lemG-u,u}, it holds that
\begin{align*}
\phi(G)=&\phi(G,x^0)+\phi(G,x^1)+\phi(G,\overline{x})\\
       \geq &\phi(U-w-z)+2+\phi(U)-\phi(U,\overline{wz})\\
       =&\phi(U)+\left(\phi(U-w-z)-\phi(U,\overline{wz})\right)+2\\
       \geq &\phi(G-N[y])+2.
\end{align*}
\end{proof}

Note that $G-N[y]$ is a caterpillar of order $n-2$. By Corollary \ref{cor1}, we get
$$\phi(G-N[y])\geq \left\lceil\frac{n-2}{2}\right\rceil +1$$ for $n\geq 5$. Thus,
\begin{align*}
\phi(G)\geq \phi(G-N[y])+2\geq \left\lceil\frac{n-2}{2}\right\rceil +1+2\geq \left\lfloor\frac{n}{2}\right\rfloor +2.
\end{align*}
The second equality holds only when $G-N[y]$ is one of extremal trees in Corollary \ref{cor1} as $G$ is one of the graphs in Figure \ref{fig2}. Since the last equality holds when $n$ is even, we can proceed to calculate the remaining graphs in the Figure \ref{fig2}, and we deserve that all equalities hold if and only if $G\in \{U_{4,4},U_{5,1}\}$.

{\bf Case 3.} $G$ is a unicyclic graph other than Case 1 and Case 2.

\begin{lem}\label{lem2}
Let $U$ be an arbitrary unicyclic graph with $|U|\geq 3$ and $w\in V(U)$ a non support vertex. Let $G_1$ be the unicyclic graph obtained from $U$ by adding $k$ leaves $v_1,\ldots,v_k$ ($k\geq 2$) on $w$. Let $G_2=G_1-wv_k + v_1v_k$, see Figure \ref{fig4}. Then $\phi(G_1)\geq \phi(G_2)$.

\end{lem}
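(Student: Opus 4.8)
The plan is to derive exact expressions for $\phi(G_1)$ and $\phi(G_2)$ by decomposing each according to the state of the attachment vertex $w$ in a maximal dissociation set $S$, and then compare them term by term. The natural building blocks are the four quantities $\phi(U-w)$, $\phi(U-N[w])$, $\phi(U,w^0)$ and $\phi(U,w^1)$ of the base graph $U$: since $v_1,\dots,v_k$ touch $U$ only through $w$, every configuration collapses to a maximal dissociation set of $U$, of $U-w$, or of $U-N[w]$, according to whether $w\in S$ and which neighbour realises its degree. Throughout I would lean on Lemma~\ref{lemc}-style locality and on the blocking rules for maximality.

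First I would compute $\phi(G_1)$. As $w$ is a support vertex of $G_1$, Lemma~\ref{lem0} gives $\phi(G_1,w^0)=0$, so $w$ is either absent or has degree exactly $1$. If $w\notin S$, maximality forces all $k$ leaves into $S$ (each of degree $0$) while $w$ cannot be re-added (its degree would be $\geq k\geq 2$), and the trace on $U$ ranges over $MD(U-w)$. If $w\in S$ with a leaf $v_i$ as partner, the remaining leaves and the $U$-neighbours of $w$ are automatically blocked, and the trace ranges over $MD(U-N[w])$, contributing a factor $k$ for the choice of $i$. If the partner is a $U$-neighbour, the trace ranges over $MD(U,w^1)$. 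Hence
\[
\phi(G_1)=\phi(U-w)+k\,\phi(U-N[w])+\phi(U,w^1).
\]

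Next I would run the same analysis on $G_2$, now tracking the states of both $w$ and $v_1$. The new subtlety is that $v_1$ has degree $2$, so a short local argument on the pendant path $w\!-\!v_1\!-\!v_k$ shows that whenever $v_1\notin S$ the leaf $v_k$ is forced into $S$ and $v_1$ is then blocked. Carrying out the cases $w\notin S$; $\{w,v_1\}\subseteq S$; and $w\in S,\ v_1\notin S$ with $w$ of degree $0$ or $1$, yields for $k\geq 3$
\[
\phi(G_2)=\phi(U-w)+(k-1)\,\phi(U-N[w])+\phi(U,w^1),
\]
so that $\phi(G_1)-\phi(G_2)=\phi(U-N[w])\geq 0$, indeed strictly positive.

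The genuinely delicate point is the case $k=2$, where $w$ carries no leaf in $G_2$ and, because $w$ is a non-support vertex of $U$, need not be a support vertex of $G_2$: a new configuration with $w\in S$ of degree $0$ survives, contributing an extra $\phi(U,w^0)$ and giving $\phi(G_2)=\phi(U-w)+\phi(U-N[w])+\phi(U,w^0)+\phi(U,w^1)$. Then $\phi(G_1)-\phi(G_2)=\phi(U-N[w])-\phi(U,w^0)$, which is nonnegative precisely by Lemma~\ref{lemG-u,u}. Thus the whole argument rests on one exhaustive case count — whose main risk is miscounting the maximality-blocking at $w$ and along $w\!-\!v_1\!-\!v_k$ — together with the single inequality $\phi(U-N[w])\geq\phi(U,w^0)$ needed to absorb the $k=2$ boundary case.
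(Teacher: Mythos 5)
Your proposal is correct and follows essentially the same route as the paper: both decompose $\phi(G_1)$ and $\phi(G_2)$ according to the state of $w$ into the building blocks $\phi(U-w)$, $\phi(U-N[w])$, $\phi(U,w^0)$, $\phi(U,w^1)$, obtain $\phi(G_1)-\phi(G_2)=\phi(U-N[w])$ for $k\geq 3$, and invoke Lemma \ref{lemG-u,u} to get $\phi(U-N[w])\geq\phi(U,w^0)$ in the boundary case $k=2$. The only cosmetic difference is that you merge the paper's two $k=2$ subcases ($MD(U,w^0)$ empty or not) into one formula, which is a harmless simplification.
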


\begin{figure}[h]
  \centering
  \includegraphics[width=120mm]{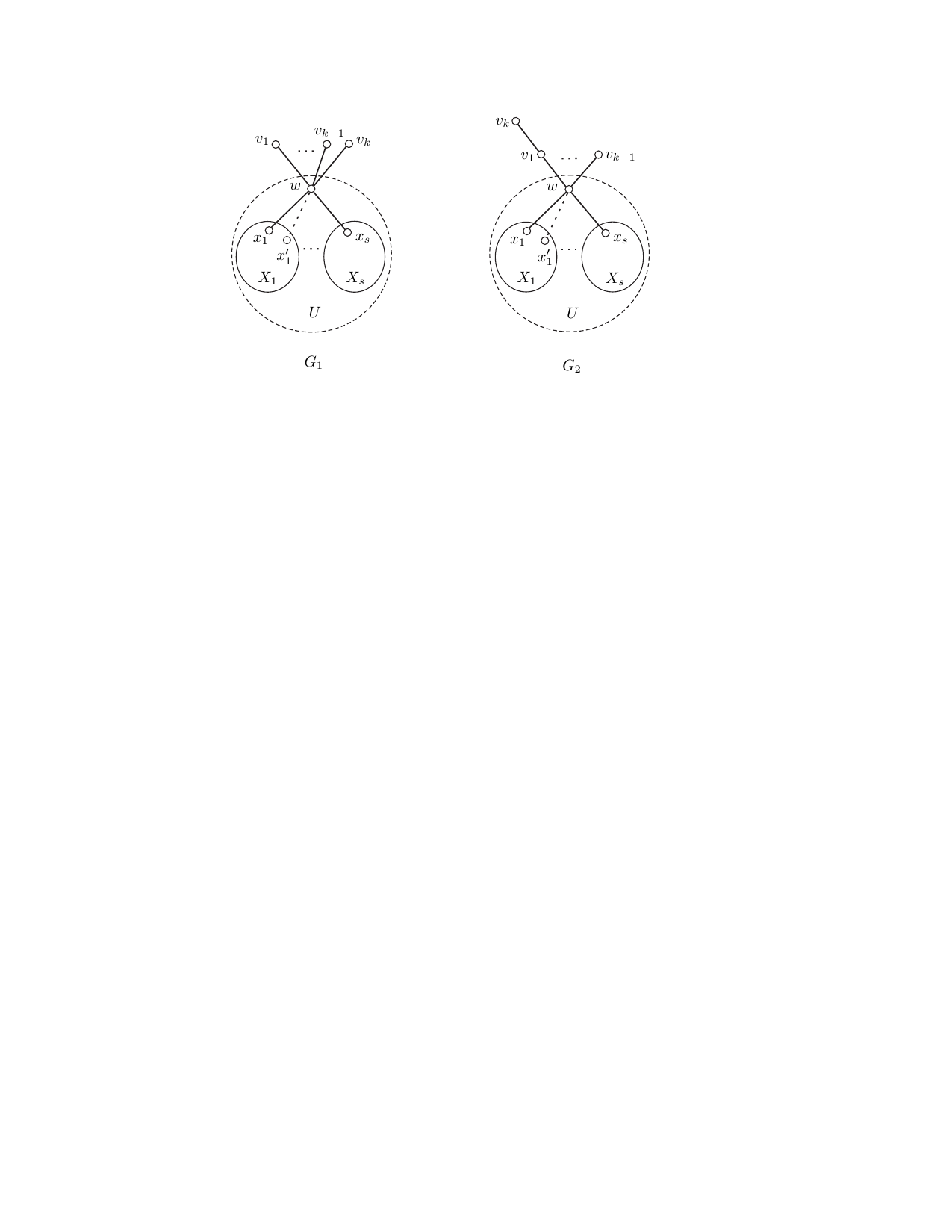}\\
  \caption{The unicyclic graphs $G_1$ and $G_2$.}\label{fig4}
\end{figure}

\begin{proof}
Consider $I=\{x_1,x_2,\ldots,x_s\}$ as the set of neighbors of $w$ in $U$. Let $X_i$ represent the component of $U-w$ that includes $x_i$ for $i\in\{1,2,\ldots,s\}$. If $w$ is not a vertex on the cycle, we assume the cycle is included in $X_1$. While if $w$ is on the cycle, we assume $x_1w$ and $x_1'w$ are two adjacent edges of cycle and consider $I'=\{x_1,x_1',x_2,\ldots,x_s\}$ as the set of neighbors of $w$ in $U$, $x_1$ and $x_1'$ are included in $X_1$ (see Figure \ref{fig4}).
Firstly, we consider the condition that $w$ is not a vertex on the cycle.

Assume that $k> 2$, $w$ is a support vertex in both $G_1$ and $G_2$, we have $\phi (G_1,w^0)=\phi (G_2, w^0)=0$ by Lemma \ref{lem0}. On this condition, we have $\phi(G_2)=\phi(G_2,\overline{w})+\phi(G_2,w^1)$. As the same arguments as before, we estimate $\phi(G_2,\overline{w})$ and $\phi(G_2,w^1)$ as follows.

{\bf Claim 1.} $\phi(G_2,\overline{w})=\phi(G_1,\overline{w})$.

Given any $S_2\in MD(G_2,\overline{w})$, we know that $w\notin S_2$ and $\{v_1,\ldots,v_{k-1},v_k\}\subseteq S_2$. Then $S_2\in MD(G_2,\overline{w})$ only when $S_2\backslash \{v_1,\ldots,v_{k-1},v_k\}\in MD(U-w)$. Thus $$\phi(G_2,\overline{w})=\phi(U-w).$$

The case $S_1\in MD(G_1,\overline{w})$ is similar. We can obtain that $S_1\in MD(G_1,\overline{w})$ only when $S_1\backslash \{v_1,\ldots,v_k\}\in MD(U-w)$.
Thus $$\phi(G_1,\overline{w})=\phi(U-w).$$ Hence, we give $\phi(G_2,\overline{w})=\phi(G_1,\overline{w})$.

{\bf Claim 2.} $\phi(G_2,w^1)=\phi(G_1,w^1)- \phi(U-N[w])$.

Note that $$\phi(G_2,w^1)=\sum_{i\in I}\phi(G_2,w^1x_i^1)+\phi(G_2,w^1v_1^1)+ \sum_{j=2}^{k-1} \phi(G_2,w^1v_j^1).$$

Given any $S_2\in MD(G_2,w^1x_i^1)$, it can be seen that $u_1\in S_2$ and $\{v_1,\ldots,v_{k-1}\}\cap S_2=\emptyset$. Then $S_2\in MD(G_2,w^1x_i^1)$ only when $S_2\backslash \{v_k\}\in MD(U,w^1x_i^1)$.
Hence, $$\sum_{i\in I} \phi(G_2,w^1x_i^1)=\phi(U,w^1).$$

The case $S_2\in MD(G_2,w^1v_1^1)$ and $S_2\in MD(G_2,w^1v_j^1)$ are similar. We can calculate that
$$\phi(G_2,w^1v_1^1)=\phi(U-N[w]).$$
and $$\sum_{j=2}^{k-1} \phi(G_2,w^1v_j^1)=(k-2)\phi(U-N[w]).$$

Thus, $\phi(G_2,w^1)=\phi(U,w^1)+(k-1)\phi(U-N[w])$.

Now we consider $G_1$. It shows that $S_1\in MD(G_1,w^1x_i^1)$ only when $S_1\in MD(U,w^1x_i^1)$ and $S_1\in MD(G_1,w^1v_j^1)$ only when $S_1\backslash \{w,v_j\}\in MD(U-N[w])$ ($j\in \{1,\ldots,k\}$). Hence,
$$\sum_{i\in I} \phi(G_1,w^1x_i^1)=\phi(U,w^1).$$
and
$$\sum_{j=1}^k \phi(G_1,w^1v_j^1)=k \cdot\phi(U-N[w]).$$
Then $$\phi(G_1,w^1)=\phi(U,w^1)+k \cdot\phi(U-N[w]).$$

Hence, $\phi(G_2,w^1)=\phi(G_1,w^1)- \phi(U-N[w])$.

Based on the above claims and $\phi(U-N[w])\geq 1$, we have
\begin{align*}
\phi(G_2)= &\phi(G_2,w^0)+\phi(G_2,w^1)+\phi(G_2,\overline{w})\\
         = &\phi(G_1,w^0)+\phi(G_1,w^1)-\phi(U-N[w])+\phi(G_1,\overline{w})\\
         = &\phi(G_1)-\phi(U-N[w])\\
         < &\phi(G_1).
\end{align*}

When $k=2$, $w$ is a support vertex in $G_1$ but not in $G_2$. If $MD(U,w^0)= \emptyset$, we have $\phi (G_2,w^0)=\phi (G_1,w^0)=0$.
Using the same argument as before, for the remaining items, we obtains
\begin{align*}
\phi(G_2,\overline{w})=&\phi(G_1,\overline{w}),\\
\phi(G_2,w^1)=&\phi(G_1,w^1)- \phi(U-N[w]).
\end{align*}

We can directly obtain that
\begin{align*}
\phi(G_2)= &\phi(G_2,w^1)+\phi(G_2,w^0)+\phi(G_2,\overline{w})\\
         = &\phi(G_1,w^1)- \phi(U-N[w])+\phi(G_1,w^0)+\phi(G_1,\overline{w})\\
         = &\phi(G_1)-\phi(U-N[w])\\
         < &\phi(G_1).
\end{align*}

Then we consider $MD(U,w^0)\neq \emptyset$. Also, $\phi (G_1,w^0)=0$ by Lemma \ref{lem0}.
Given any $S_2\in MD(G_2,w^0)$, $v_1\notin S_2$ and $v_k\in S_2$. Then $S_2\in MD(G_2,w^0)$ only when
$S_2\backslash \{v_k\}\in MD(U,w^0)$. It is clear that $$\phi (G_2,w^0)=\phi(U,w^0).$$ Hence,
$\phi (G_2,w^0)=\phi (G_1,w^0)+\phi(U,w^0)$.

We have the following by Lemma \ref{lemG-u,u}:
\begin{align*}
\phi(G_2)= &\phi(G_2,w^1)+\phi(G_2,w^0)+\phi(G_2,\overline{w})\\
         = &\phi(G_1,w^1)- \phi(U-N[w])+\phi(G_1,w^0)+\phi(U,w^0)+\phi(G_1,\overline{w})\\
         = &\phi(G_1)-\left( \phi(U-N[w])-\phi(U,w^0)\right)\\
         \leq &\phi(G_1),
\end{align*}
with equality holds only when $\phi(U-N[w])=\phi(U,w^0)$.

The above discussion illustrates that $\phi(G_1)\geq \phi(G_2)$ when $w$ is not on the cycle. In fact, when $w$ is on the cycle, we just need to replace the neighbor set $I$ of $w$ with $I'$ and keep others unchanged. The same arguments demonstrate that the above claims hold regardless of whether $w$ is the vertex on the cycle, and the lemma has been proved.
\end{proof}

Let $G$ be a unicyclic graph, which attains the lower bound on the number of all maximal dissociation sets. If $G$ has no leaf, then $G=C_3$. If $G$ has a leaf $x$ and $y$ is the support vertex, it is clear $d_G(y)\geq 2$. Consider a vertex $z$ adjacent to $y$, distinct from $x$. Now we just need to consider that $G$ is a unicyclic graph not in case 1 and case 2.

If $d_G(y)>2$ and $|G-N[y]\backslash \{z\}|=1$, then $y$ must be a vertex of a triangle. We deduce that $\phi(G)> \phi(G-yz+xz)$, contradicting the definition of $G$.
If $|G-N[y]\backslash \{z\}|=2$, then $y$ must on the cycle. When $G$ has no pendent edges, we have $\phi(G)\geq \phi(G-yz+xz)$. If the equality holds,
we can shift $G$ to a unicyclic graph with a pendent edge. If not, a contradiction arises regarding the definition of $G$.

Now we consider
$|G-N[y]\backslash \{z\}|>2$ and $G$ has no support vertex with degree 2. Treat $G$ as $G_1$, we have $\phi(G)\geq \phi(G_2)$ by Lemma \ref{lem2}. The same argument as before, when $\phi(G)= \phi(G_2)$, we can shift $G$ to a unicyclic graph with a pendent edge. When $\phi(G)> \phi(G_2)$, we have a contradiction. That implies that $G$ has a support vertex with degree 2, and has the form showed in Figure \ref{fig5}.

\begin{figure}[h]
  \centering
  \includegraphics[width=90mm]{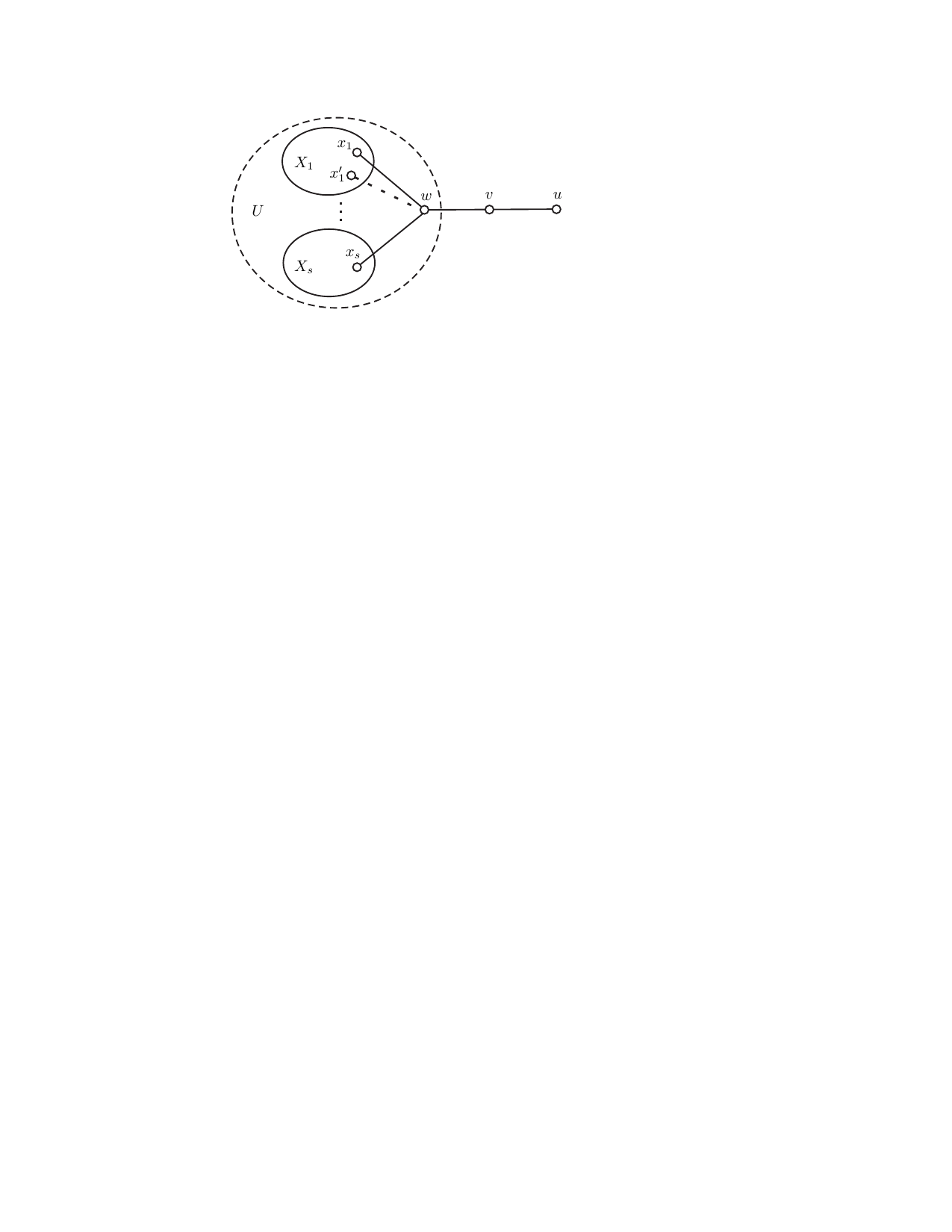}\\
  \caption{The extremal unicyclic graph $G$.}\label{fig5}
\end{figure}

\begin{lem}\label{G-uv}
	Let $G$ be a unicyclic graph showed in Figure \ref{fig5}. Then $\phi(G)\geq \phi(G-\{u,v\})+1$.
\end{lem}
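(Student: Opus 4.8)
The plan is to partition the maximal dissociation sets of $G$ according to whether they contain the pendant vertex, and to bound the two parts separately. Following Figure \ref{fig5}, let $v$ be the leaf and $u$ the adjacent degree-$2$ support vertex (so that $\{u,v\}$ is the pendant $P_2$ deleted in the lemma), and write $z$ for the second neighbour of $u$, so that $N_G(u)=\{v,z\}$; put $H=G-\{u,v\}$ and note $z\in V(H)$. Since $\phi(G)=\phi(G,v)+\phi(G,\overline{v})$, it suffices to establish the two inequalities $\phi(G,v)\geq\phi(H)$ and $\phi(G,\overline{v})\geq1$, which add up to exactly the desired bound.

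For $\phi(G,v)\geq\phi(H)$ I would exhibit an explicit injection $\Phi\colon MD(H)\to MD(G,v)$. Given $T\in MD(H)$, define $\Phi(T)=T\cup\{v\}$ if $z\in T$, and $\Phi(T)=T\cup\{u,v\}$ if $z\notin T$. Every image contains $v$ and satisfies $\Phi(T)\cap V(H)=T$, so $\Phi$ is injective (one recovers $T$ as $\Phi(T)\cap V(H)$); the substantive point is that $\Phi(T)$ really is a \emph{maximal} dissociation set of $G$. That $\Phi(T)$ is a dissociation set is immediate: the added vertices induce at most the single edge $uv$, and since $z\notin T$ in the second case (and $u\notin\Phi(T)$ in the first), no vertex of $T$ gains a neighbour. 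Maximality is where the hypothesis $d_G(u)=2$ enters: in the first case $u$ cannot be added, since both its neighbours $v,z$ already lie in $\Phi(T)$; in the second case $z$ cannot be added, since its neighbour $u$ already has degree $1$ in $\Phi(T)$; and for every other vertex the blocking condition is inherited verbatim from the maximality of $T$ in $H$, because deleting $\{u,v\}$ alters the neighbourhood of $z$ only.

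For $\phi(G,\overline{v})\geq1$ I would produce a single maximal dissociation set avoiding $v$. Start from the edge $\{u,z\}$, a dissociation set in which $u$ already has degree $1$, and extend it greedily to a maximal dissociation set $S^{*}$ of $G$. Since $u$ is saturated by $z$, the leaf $v$ can never be added without raising $d_{G[S^{*}]}(u)$ to $2$, so $v\notin S^{*}$ and $S^{*}\in MD(G,\overline{v})$. Combining the two bounds gives $\phi(G)\geq\phi(H)+1=\phi(G-\{u,v\})+1$; equivalently, the image of $\Phi$ together with $S^{*}$ exhibits $\phi(H)+1$ distinct maximal dissociation sets of $G$, the last of these being missed by $\Phi$ precisely because every set in its image contains $v$.

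The main obstacle, and the step I would treat most carefully, is the maximality verification inside the injection: one must confirm that deleting $\{u,v\}$ preserves the blocking status of every vertex of $H$ except at $z$, and then check that the two recipes for $\Phi(T)$ restore the block at $u$ (resp. at $z$) in exactly the case where it is threatened. This is where $d_G(u)=2$ is indispensable, since a support vertex of larger degree could leave $z$ unblocked after the deletion and thereby destroy the correspondence.
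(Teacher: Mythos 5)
Your proof is correct, and while it is organized around a different decomposition, its combinatorial core coincides with the paper's. (Note your labels are swapped relative to the paper: there $u$ is the leaf and $v$ the degree-$2$ support vertex; harmless.) The paper splits $\phi(G)=\phi(G,w^0)+\phi(G,w^1)+\phi(G,\overline{w})$ at the attachment vertex $w$ (your $z$) and proves three claims: $\phi(G,w^0)=\phi(U,w^0)$, $\phi(G,w^1)\geq\phi(U,w^1)+1$ (via a further split over the neighbours $x_i$ of $w$, with a modified index set $I'$ when $w$ lies on the cycle), and $\phi(G,\overline{w})\geq\phi(U,\overline{w})$, where $U=G-\{u,v\}$. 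You instead split at the leaf, $\phi(G)=\phi(G,v)+\phi(G,\overline{v})$, and fold the paper's three correspondences into one injection: your map $T\mapsto T\cup\{v\}$ (if $z\in T$) and $T\mapsto T\cup\{u,v\}$ (if $z\notin T$) is exactly the paper's piecewise rule of adding only the leaf when the attachment vertex is present and adding both pendant vertices when it is absent, and your extra set $S^{*}\supseteq\{u,z\}$ is precisely the paper's witness for $\phi(G,w^1v^1)\geq 1$. What your packaging buys: the maximality verifications are done once, directly and uniformly; the distinction over whether $z$ lies on the cycle (the paper's $I$ versus $I'$ bookkeeping) disappears entirely; and the extra unit in the count is cleanly separated, since $S^{*}$ avoids the leaf while every set in the image of your injection contains it, making disjointness immediate. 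What the paper's packaging buys: its claim-by-claim accounting at $w$ mirrors the structure of the proof of Lemma \ref{lem2}, so the two lemmas read in parallel and reuse the same notation $\phi(\cdot,\widehat{x_1}\widehat{x_2})$.
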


\begin{proof}
We define $I=\{x_1,x_2,\ldots,x_s\}$, $I'=\{x_1,x_1',\ldots,x_s\}$ and $X_1,\ldots,X_s$ as in the proof of Lemma \ref{lem2}. Let $U=G-\{u,v\}$.

Since $\phi(G)=\phi(G,w^0)+\phi(G,w^1)+\phi(G,\overline{w})$, we now consider each term on the right-hand side of the equality.

{\bf Claim 1.} $\phi(G,w^0)=\phi(G-\{u,v\},w^0)$.

We observe that $S\in MD(G,w^0)$ only when $S\backslash \{u\}\in MD(U,w^0)$ regardless of whether $w$ is on the cycle or not. The claim can be readily inferred.

{\bf Claim 2.} $\phi(G,w^1)=\phi(G-\{u,v\},w^1)+1$.

Note that $$\phi(G,w^1)=\sum_{i\in I}\phi(G,w^1x_i^1)+\phi(G,w^1v^1)$$ or $$\phi(G,w^1)=\sum_{i\in I'}\phi(G,w^1x_i^1)+\phi(G,w^1v^1).$$

Given any $S\in MD(G,w^1x_i^1)$ ($i\in I$ or $i\in I'$), there must exist $u\in S$ and $v\notin S$. We know that $S\in MD(G,w^1x_i^1)$ only when $S\backslash \{u\}\in MD(U,w^1)$. Hence,
$$\sum_{i\in I}\phi(G,w^1x_i^1)=\phi(U,w^1)$$ or $$\sum_{i\in I'}\phi(G,w^1x_i^1)=\phi(U,w^1).$$

Given any $S\in MD(G,w^1v^1)$, there is $\left(N[w]\cup N[v]\right)\cap S=\{w,v\}$. We know that $S\in MD(G, w^1v^1)$ only when
$S\backslash \{w,v\}\in MD(U-N[w])$. Furthermore, $\phi(G,w^1v^1)\geq 1$.

Thus we have $\phi(G,w^1)\geq \phi(U,w^1)+1=\phi(G-\{u,v\},w^1)+1$.

{\bf Claim 3.} $\phi(G,\overline{w})\geq \phi(G-\{u,v\},\overline{w})$.

Given any $S'\in MD(G-\{u,v\},\overline{w})$, $S'\notin MD(G)$ and let $S=S'\cup \{u,v\}$. Consequently, $S$ must be maximal in $G$ if $S'$
is maximal in $G-\{u,v\}$, indicating the existence of an injection from $S'$ to $S$. Then we have $\phi(G,\overline{w})\geq \phi(G-\{u,v\},\overline{w})$.

Based on the above discussions, we get
\begin{align*}
\phi(G)=&\phi(G,w^0)+\phi(G,w^1)+\phi(G,\overline{w})\\
       \geq &\phi(G-\{u,v\},w^0)+\phi(G-\{u,v\},w^1)+1+\phi(G-\{u,v\},\overline{w})\\
       =&\phi(G-\{u,v\})+1.
\end{align*}

The proof of the lemma is finished.
\end{proof}

Now let us focus on the proof of Case 3 again. Based on the induction hypothesis,
$$\phi(G-\{u,v\})\geq \left\lfloor\frac{n-2}{2}\right\rfloor+2=\left\lfloor\frac{n}{2}\right\rfloor+1$$
and all the equality holds only when $G-\{u,v\}\in \{U_{6,0},U_{5,1},U_{4,4}\}$ or $G-\{u,v\}=U(\frac{n-5}{2},\frac{n-5}{2})$ for odd $n$ and $G-\{u,v\}=U(\frac{n-4}{2},\frac{n-6}{2})$ for even $n$.

If $G-\{u,v\}\in \{U_{6,0},U_{5,1},U_{4,4}\}$, for any $w\in G-\{u,v\}$, we have calculated directly that $\phi(G)>\left\lfloor\frac{n}{2}\right\rfloor+2$. Under these circumstances where $G-\{u,v\}$ is either $U(\frac{n-5}{2},\frac{n-5}{2})$ or $U(\frac{n-4}{2},\frac{n-6}{2})$, we will demonstrate that $\phi(G)$ achieves the lower bound $\left\lfloor\frac{n}{2}\right\rfloor+2$ only when $w$ functions as the center vertex in $G-\{u,v\}$.

{\bf Subcase 1.} $w$ is the leaf of $G-\{u,v\}$.

On this condition, we can calculate directly that
 $\phi(G)=\frac{3n-1}{2}>\left\lfloor\frac{n}{2}\right\rfloor+2$ if $G-\{u,v\}=U(\frac{n-5}{2},\frac{n-5}{2})$, and
$\phi(G)=\frac{3n+2}{2}>\left\lfloor\frac{n}{2}\right\rfloor+2$ or $\phi(G)=\frac{n+6}{2}>\left\lfloor\frac{n}{2}\right\rfloor+2$ if $G-\{u,v\}=U(\frac{n-4}{2},\frac{n-6}{2})$.

{\bf Subcase 2.} $w$ is on the triangle excluding the center of $G-\{u,v\}$.

On this condition, we can calculate directly that $\phi(G)=\frac{n+5}{2}>\left\lfloor\frac{n}{2}\right\rfloor+2$ if $G-\{u,v\}=U(\frac{n-5}{2},\frac{n-5}{2})$,
and $\phi(G)=\frac{n+6}{2}>\left\lfloor\frac{n}{2}\right\rfloor+2$ if $G-\{u,v\}=U(\frac{n-4}{2},\frac{n-6}{2})$.

{\bf Subcase 3.} $w$ is the center of $G-\{u,v\}$.

On this condition, we can calculate directly that $\phi(G)=\left\lfloor\frac{n}{2}\right\rfloor+2$ if $G-\{u,v\}=U(\frac{n-5}{2},\frac{n-5}{2})$
or $G-\{u,v\}=U(\frac{n-4}{2},\frac{n-6}{2})$.

{\bf Subcase 4.} $w$ is neither the leaf nor on the triangle of $G-\{u,v\}$.

On this condition, we can calculate directly that $\phi(G)=\frac{n+5}{2}>\left\lfloor\frac{n}{2}\right\rfloor+2$ if $G-\{u,v\}=U(\frac{n-5}{2},\frac{n-5}{2})$,
and $\phi(G)=\frac{n+6}{2}>\left\lfloor\frac{n}{2}\right\rfloor+2$ if $G-\{u,v\}=U(\frac{n-4}{2},\frac{n-6}{2})$.

By combining Case 1 and Case 2, we complete the proof of Theorem \ref{th1}.

\section{Date Availability}

No date was used for the research described in the article.

\section{Statements and Declarations}

This work was supported by the National Natural Science Foundation of China [Grant number, 12301455], Natural Science Foundation of Shanxi Province
[Grant number, 202203021212484]  and Natural Science Foundation of Shandong Province [Grant number, ZR2023QA080]. The authors declare that they have no conflict of interest.

\end{document}